\DeclareMathOperator*{\hc}{hocolim}
\DeclareMathOperator{\hofib}{hofib}
\DeclareMathOperator{\Sp}{\mathscr{S}p}
\DeclareMathOperator{\Hom}{Hom}
\DeclareMathOperator{\Fun}{Fun}
\DeclareMathOperator{\Mod}{Mod}
\newcommand{\cC}{\mathcal{C}}
\newcommand{\sD}{\mathcal{D}}
\newcommand{\cF}{\mathcal{F}}
\newcommand{\cG}{\mathcal{G}}
\newcommand{\cO}{\mathcal{O}}
\newcommand{\cP}{\mathcal{P}}
\newcommand{\cT}{\mathcal{T}}
\newcommand{\cX}{\mathcal{X}}
\newcommand{\DD}{\mathbb{D}}
\newcommand{\II}{\mathbb{I}}
\newcommand{\NN}{\mathbb{N}}
\newcommand{\PP}{\mathbb{P}}
\newcommand{\al}{\alpha}
\newcommand{\Si}{\Sigma}
\newcommand{\nulll}{\emptyset}
\newcommand{\sm}{\wedge}
\def\mapdown#1.{\Big\downarrow\rlap{$\vcenter{\hbox{$\scriptstyle#1$}}$}}
\newtheorem{thm}{Theorem}[section]
\newtheorem{prop}[thm]{Proposition}
\newtheorem{lemma}[thm]{Lemma}
\theoremstyle{definition}
\newtheorem{defn}[thm]{Definition}
\theoremstyle{definition}
\newtheorem{ex}[thm]{Example}
\theoremstyle{definition}
\newtheorem{rem}[thm]{Remark}
\theoremstyle{definition}
\theoremstyle{definition}
\newtheoremstyle{TheoremForIntro} %%LABELS THEOREMS FOR INTRO WITH SAME NUMBER AS LATER APPEARS, PUT \ref{...} AS NAME OF THEOREM
        {.6em}{.6em}              %%% space between body and thm
        {\itshape}                      %%% Thm body font
        {}                              %%% Indent amount (empty = no indent)
        {\bfseries}                     %%% Thm head font
        {. }                             %%% Punctuation after thm head
        { }                             %%% Space after thm head
        {\thmname{#1}\thmnote{ \bfseries #3}}%%% Thm head spec
    \theoremstyle{TheoremForIntro}
    \newtheorem{TheoremIntro}[thm]{Theorem}
\newcommand{\op}{\mathscr{O}}
\newcommand{\Id}{\text{id}}
\newcommand{\Symseq}{\text{SymmSeq}}
\newcommand{\Symfun}{\text{SymmFun}}
\title[A monoidal model for multilinearization]{A monoidal model for multilinearization}
\author{Sarah Yeakel}
\address{Department of Mathematics \\
University of Maryland \\
College Park, MD 20742}
\email{syeakel@math.umd.edu}
\begin{document}

\begin{abstract}
	Using the category of finite sets and injections, we construct a new model for the multilinearization of multifunctors between spaces that appears in the derivatives of Goodwillie calculus. We show that this model yields a lax monoidal functor from the category of symmetric functor sequences to the category of symmetric sequences of spaces after evaluating at $S^0$.  
\end{abstract}

\maketitle
\thispagestyle{empty}

%\section{intro}

The chain rule of calculus gives a way of relating the derivative of a composition to the derivatives of the composite functions; put another way, taking derivatives preserves composition of functions in some sense. Higher order derivatives satisfy a similar chain rule, discovered by Fa\`{a} di Bruno. This relationship between derivatives is a foundational tool for computations in any setting where derivatives make sense.

In \cite{G3}, a notion of derivative was introduced by Goodwillie for homotopical functors from spaces to spaces or spectra. He constructed a Taylor tower of polynomial approximations and identified the layers of this tower as infinite loop spaces, classified by certain spectra, called the derivatives of the functor. He further identified the homotopy type of the derivatives, showing that the $n$th derivative is equivalent to the multilinearization of the $n$th cross-effect of the functor evaluated at $S^0$. In \cite{kleinrognes}, Klein and Rognes gave a chain rule for Goodwillie's first derivatives establishing a non-manifold computation of the first derivative of a mapping space functor. In \cite{Chingthesis}, Ching showed that the higher derivatives of the identity functor of spaces form an operad by showing that the dual spectra form a cooperad, building on work of Johnson and Arone-Mahowald \cite{Johnsonthesis,AMpartitions}. Working with these models and using duality, Arone and Ching showed that the derivatives of other functors are bimodules over the derivatives of the identity \cite{ACchain}. From these module structures, they proved a Fa\`{a} di Bruno chain rule for the higher derivatives of functors. 

One question in \cite{ACchain} is whether the derivatives functor is lax monoidal as a functor from endofunctors of spaces to symmetric sequences, a categorical way of capturing the preservation of composition. This would account for the operad and module structures of derivatives and would extend the results of \cite{ACchain} to other functors. This note seeks to push closer to a positive answer to that question. 
%We give a monoidal model of multilinearization for multifunctors of spaces. 

Goodwillie's identification of the derivatives as multilinearized cross-effects evaluated at $S^0$ can be broken into a composition. Let $\cT$ be the category of pointed spaces, let $\cT^n$ be the cartesian product of $n$ copies of $\cT$, and let $\Fun(\cT,\cT)$ be the category of endofunctors of $\cT$. The first functor sends an endofunctor of spaces to its sequence of cross-effects. The second functor is multilinearization at each level followed by evaluation at $S^0$ in each variable.
\[
\xymatrix{ \Fun(\cT, \cT) \ar[rr]^{cr_\ast} && \prod_{n \geq 1} \Fun(\cT^n, \cT) \ar[rr]^{\DD_1^{(\ast)}-(S^0)} && \Symseq(\cT)    }
\]
We call an object $\cF_\ast$ of $\prod_{n \geq 1} \Fun(\cT^n,\cT)$ a symmetric functor sequence; it is a sequence of functors $\cF_k:\cT^k \to \cT$ which are symmetric in all $k$ variables. We say a functor is pointed if it preserves the one point space. The category of symmetric functor sequences has a monoidal product induced by composition of functors. The category of symmetric sequences in spaces also has a monoidal product called the composition product. We prove the following result, which says that the second functor is lax monoidal with respect to these structures. 

\begin{TheoremIntro}[\ref{linisoperad}]
	Given symmetric functor sequences $\cF_\ast, \cG_\ast$ which are pointed in each variable, there are natural maps $S^0 \to \DD_1\Id(S^0)$ and
\[
\DD_1^{(\ast)}\cG_\ast(S^0) \circ \DD_1^{(\ast)}\cF_\ast(S^0) \to \DD_1^{(\ast)}(\cG \circ \cF)_\ast(S^0)
\]
	where $\II$ is the category of finite sets and injections,
	$\DD_1$ is defined by
	\[
\DD_1 F (X) = \hc_{U \in \II} \Omega^U F(\Sigma^U X),
	\]
	and $\DD_1^{(n)}$ denotes applying $\DD_1$ in each of the $n$ variables.
	Further, for symmetric functor sequences satisfying extra connectivity conditions, $\DD_1^{(n)}$ is equivalent to ordinary multilinearization.
\end{TheoremIntro}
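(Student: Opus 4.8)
The plan is to construct both structure maps \emph{termwise} over the indexing category $\II$, using that $(\II,\sqcup,\nulll)$ is symmetric monoidal and that $\Omega^{U\sqcup V}\cong\Omega^U\Omega^V$ and $\Sigma^{U\sqcup V}\cong\Sigma^U\Sigma^V$ strictly, and then pushing the resulting diagrams forward along the monoidal product $\sqcup\colon\II\times\II\to\II$. The unit is the easy case. Since $\nulll$ is initial in $\II$, the $U=\nulll$ vertex of $\DD_1\Id(S^0)=\hc_{U}\Omega^U S^U$ is $S^0$, and the structure map into the homotopy colimit provides $S^0\to\DD_1\Id(S^0)\simeq QS^0$; this is the unit of the loop--suspension adjunction and is evidently natural.

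For the lax structure map the essential point is that the source is a composition \emph{product} of symmetric sequences of spaces, not a composite of functors, so no functor need be commuted past a homotopy colimit. I would first treat the degree-one case, where the map required is
\[
\Omega^V\cG(S^V)\wedge\Omega^U\cF(S^U)\ \longrightarrow\ \Omega^{U\sqcup V}\bigl(\cG(S^V)\wedge\cF(S^U)\bigr)\ \longrightarrow\ \Omega^{U\sqcup V}\cG\bigl(\cF(S^{U\sqcup V})\bigr),
\]
indexed over $(U,V)\in\II\times\II$. The first arrow is the lax monoidal structure map of the mapping-space functor $\Omega^{(-)}$, and the second is built from the assembly maps of $\cG$ and $\cF$: the assembly $\cF(S^U)\wedge\cG(S^V)\to\cG\bigl(\cF(S^U)\wedge S^V\bigr)$ followed by $\cG$ applied to $\cF(S^U)\wedge S^V=\Sigma^V\cF(S^U)\to\cF(\Sigma^V S^U)=\cF(S^{U\sqcup V})$. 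These assembly maps exist because the functors are pointed, which is where that hypothesis enters. The target is exactly the $W=U\sqcup V$ vertex of $\DD_1(\cG\circ\cF)(S^0)=\hc_W\Omega^W\cG(\cF(S^W))$, so taking the homotopy colimit along $\sqcup$ yields the map in degree one, and smash-distributivity of homotopy colimits identifies the source with $\DD_1\cG(S^0)\wedge\DD_1\cF(S^0)$.

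To reach general symmetric functor sequences I would run the same construction on each wedge summand $\cG_m\bigl(\cF_{j_1},\dots,\cF_{j_m}\bigr)$ of $(\cG\circ\cF)_\ast$, distributing the outer indexing sets $V_1,\dots,V_m$ coming from $\cG_m$ into the variables of the inner functors $\cF_{j_i}$ via the multivariable assembly maps, and taking the disjoint union of all indexing sets across all $n=\sum j_i$ variables. The bookkeeping then matches, surjection by surjection, the composition product of $\DD_1^{(\ast)}\cG_\ast(S^0)$ and $\DD_1^{(\ast)}\cF_\ast(S^0)$ with the summands of $\DD_1^{(\ast)}(\cG\circ\cF)_\ast(S^0)$. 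I expect the \textbf{main obstacle} to be exactly this combinatorial and equivariant coherence: one must check that the zigzag of lax-monoidal and assembly maps is strictly natural in the injections of $\II^{\times n}$, so that it descends to homotopy colimits, and that it is equivariant for the block-permutation $\Sigma_m$-actions built into the composition product, so that the result is a map of \emph{symmetric} sequences. Using $\II$ rather than $\NN$ is precisely what makes this possible, since $\sqcup$ is strictly associative, unital, symmetric, and functorial for injections, which is exactly the structure the composition product's permutations demand; the associativity and unit coherences of the lax structure then reduce to the symmetric monoidal coherences of $(\II,\sqcup,\nulll)$ together with naturality of the assembly maps.

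Finally, for the comparison with ordinary multilinearization I would use the natural map from the sequential homotopy colimit over the standard inclusions $\NN^n\hookrightarrow\II^n$, which computes the usual $n$-fold linearization, to the $\II^n$-model defining $\DD_1^{(n)}$. Restriction along $\NN^n\to\II^n$ is \emph{not} homotopy cofinal, so the comparison genuinely uses the hypotheses: under the stated connectivity conditions, Freudenthal and Blakers--Massey make each structure map $\Omega^U\cF(\Sigma^U X)\to\Omega^{U'}\cF(\Sigma^{U'}X)$ increasingly highly connected, so the non-standard injections and the automorphisms of $\II^n$ contribute through arbitrarily highly connected maps. I would then conclude that the comparison map is $\infty$-connected, hence a weak equivalence, so that $\DD_1^{(n)}$ agrees with ordinary multilinearization in the convergent range.
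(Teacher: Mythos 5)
Your unit map and your degree-one structure map $\mu_{1,1}$ agree with the paper's construction, and your appeal to the B\"okstedt/Dundas--Goodwillie--McCarthy connectivity criterion for the comparison with ordinary multilinearization is also the paper's argument. But there is a genuine gap at the step you describe as ``distributing the outer indexing sets $V_1,\dots,V_m$ coming from $\cG_m$ into the variables of the inner functors $\cF_{j_i}$ via the multivariable assembly maps.'' Assembly for a multifunctor puts a single smash factor into a single variable; it cannot turn the one sphere $S^{U_i}$ recording the $i$th loop coordinate of $\DD_1^{(k)}\cG_k(S^0)$ into the $j_i$ spheres needed, one for each variable of $\cF_{j_i}$, which is what the target $\DD_1^{(\sum j_i)}(\cG\circ\cF)_{\sum j_i}(S^0)$ demands (it has a separate indexing set and a separate loop coordinate for each of the $\sum j_i$ inner variables). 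The diagonal of a sphere is nullhomotopic, so there is no naive way to duplicate $S^{U_i}$. The paper resolves this with Proposition \ref{magic}: before assembling, one smashes with the $j_i$th space of the sphere operad $\mathbf{S}^{U_i}$, producing a strictly associative stabilization $\Omega^{U_i}\cG_k(\dots,S^{U_i},\dots)\to\Omega^{\coprod_{j_i}U_i}\cG_k(\dots,S^{\coprod_{j_i}U_i},\dots)$, after which exactly one copy of $S^{U_i}$ assembles into each variable of $\cF_{j_i}$. Your proposal never introduces this ingredient, and without it the general $\mu_{k,j_1,\dots,j_k}$ does not land in the stated target.

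Relatedly, your claim that the associativity and unit coherences ``reduce to the symmetric monoidal coherences of $(\II,\sqcup,\nulll)$ together with naturality of the assembly maps'' is incomplete: the strict associativity of the lax structure also rests on the strict associativity of the sphere operad composition maps $S^{k-1}\sm S^{j_1-1}\sm\cdots\sm S^{j_k-1}\to S^{j_1+\cdots+j_k-1}$, which is precisely why the paper uses the Arone--Kankaanrinta model rather than arbitrary stabilization maps. (Two minor points: the composition product is indexed by unordered partitions rather than surjections, and the existence of assembly comes from continuity of the functors, with pointedness being a necessary consequence of continuity rather than the direct source of assembly.)
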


This theorem then reduces the question of \cite{ACchain} to: Is there a model for $cr_\ast: \Fun(\cT, \cT)\to \prod_{n \geq 1} \Fun_\ast(\cT^n, \cT)$ which is monoidal with respect to composition? 
The cross-effects functor is monoidal in some settings, for example, for functors of abelian categories \cite{JMderiving}.

In the first section, we review Goodwillie's definition of linearization and relevant definitions of operads, including the notion of symmetric functor sequences. In the second section, we discuss using the category of finite sets and injections to replace the straight line category as an index for homotopy colimits. We also review the definition of the sphere operad of \cite{AKsphere} and a modification which produces useful maps in linearizations. In the third section, we apply the new linearization to symmetric multifunctors, recovering the main theorem and the corollary that under certain connectivity conditions, the multilinearization of a functor-operad is an operad.

The work in this paper forms a portion of the author's PhD thesis written at the University of Illinois under the supervision of Randy McCarthy, to whom considerable thanks are due for insight, encouragement, and patience. The author would also like to thank Greg Arone for helping correct multiple mistakes in the first draft.

\section{Background and Conventions}

In this section, we review the notion of linearization in functor calculus and relevant definitions.

\begin{defn}
	Let $F:\cT \to \cT$ be an endofunctor of pointed spaces. We say $F$ is a \emph{homotopy functor} if it preserves weak equivalences. We say $F$ is \emph{reduced} if $F(\ast) \simeq \ast$, and \emph{pointed} if $F(\ast)=\ast$. Finally, $F$ is \emph{continuous} if the natural map $\Hom(X,Y) \to \Hom(F(X), F(Y))$ is a continuous function. 
\end{defn}

Assembly maps are incredibly useful to the point of view offered here, so we review their construction. 

\begin{lemma} \label{contthenass}
	If $F$ is a continuous functor, then $F$ has \emph{assembly}, a binatural transformation which is also natural in $F$ given by
	\[\alpha_F:Z \sm F(X) \longrightarrow F(Z \sm X).\]
\end{lemma}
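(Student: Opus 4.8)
The plan is to prove Lemma~\ref{contthenass} by building the assembly map explicitly out of the continuity structure, then verifying naturality by hand. The target is a binatural transformation $\alpha_F: Z \sm F(X) \to F(Z \sm X)$, so I would first reduce the smash product to the more flexible mapping-space/adjunction picture. The guiding principle is that in spaces the cofiber sequence $Z \sm F(X)$ is built from $Z$ and $F(X)$, and a map out of a smash product $Z \sm F(X)$ corresponds, by the tensor-hom adjunction in pointed spaces, to a map $Z \to \Hom_{\cT}(F(X), F(Z \sm X))$.

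First I would fix the point $z \in Z$ and consider the map $X \to Z \sm X$ sending $x \mapsto z \sm x$; this is the map $-\sm x$ pre-composed appropriately, or more precisely the adjoint to the identity. The key observation is that continuity of $F$ gives a continuous function $\Hom(X, Z\sm X) \to \Hom(F(X), F(Z \sm X))$, and the source is where $Z$ naturally lands: there is a continuous (indeed, the canonical) map $Z \to \Hom(X, Z \sm X)$ adjoint to the identity of $Z \sm X$. Composing these two continuous maps yields a continuous map
\[
Z \longrightarrow \Hom(X, Z\sm X) \xra{F} \Hom(F(X), F(Z \sm X)),
\]
and taking the adjoint of this composite across the smash-hom adjunction produces exactly the desired assembly map $\alpha_F: Z \sm F(X) \to F(Z \sm X)$. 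So the construction itself is essentially forced once continuity is invoked; the content is that $F$ acting on morphisms is continuous, which is precisely the hypothesis.

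Next I would verify the three naturality claims. Binaturality in $Z$ and in $X$ follows from chasing the adjunction: a map $Z \to Z'$ induces a commuting square because the canonical maps $Z \to \Hom(X, Z \sm X)$ are themselves natural in $Z$, and similarly for $X$ using that $F$ is a functor. Naturality in $F$ — that a natural transformation $F \Rightarrow G$ of continuous functors commutes with the respective assembly maps — follows because the construction only used the functorial action of $F$ on the single morphism $-\sm x$ together with the fixed canonical map $Z \to \Hom(X, Z \sm X)$, which does not depend on $F$ at all. Hence a natural transformation intertwines the two continuous maps $\Hom(X, Z\sm X) \to \Hom(F(X), F(Z\sm X))$ and $\Hom(X, Z \sm X) \to \Hom(G(X), G(Z \sm X))$, and this passes through the adjunction.

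The main obstacle I anticipate is not conceptual but point-set: ensuring all the maps are genuinely continuous and basepoint-preserving in the chosen convenient category of pointed spaces, so that the tensor-hom adjunction $\Hom_\cT(Z \sm F(X), F(Z \sm X)) \cong \Hom_\cT(Z, \Hom_\cT(F(X), F(Z \sm X)))$ actually holds and the adjoint transpose is well-defined. In particular one must be careful that the canonical unit map $Z \to \Hom(X, Z \sm X)$ is a pointed map and that $F$'s action on mapping spaces is pointed as well, so that the composite sends the basepoint of $Z$ to the basepoint, making the adjoint a map of pointed spaces rather than merely unpointed ones. Once the ambient category is fixed to be compactly generated (or another cartesian-closed convenient category), these continuity checks are routine, and the naturality diagrams commute by the functoriality of the adjunction.
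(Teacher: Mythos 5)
Your construction is exactly the paper's proof: both take the identity of $Z \sm X$, pass it across the smash-hom adjunction to a map $Z \to \Hom(X, Z\sm X)$, push forward along the continuity map $F_\ast$, and adjoint back to obtain $\alpha_F$. Your closing point-set concern about pointedness is also addressed in the paper, which remarks immediately after the proof that continuity forces $F$ to be pointed so that the adjunction works in pointed spaces.
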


\begin{proof}
	The assembly map is given by pushing the identity through the following maps
\begin{align*}
\Hom(Z \sm X, Z \sm X) \xrightarrow{\  \cong\ } &\Hom (Z, \Hom(X, Z \sm X)) \\
\xrightarrow{F_\ast} &\Hom(Z, \Hom(F(X), F(Z \sm X))) \\
\xrightarrow{\ \cong\ } &\Hom(Z \sm F(X), F(Z \sm X))
\end{align*}\end{proof}

Note that this proof requires $\Hom(X,Y) \to \Hom(F(X),F(Y))$ to be a pointed map, and so $X \to \ast \to Y$ must be sent to the basepoint of $\Hom(F(X),F(Y))$, thus a functor $F$ must be pointed in order to be continuous.

% Explicitly, for calculations: following the identity through the first two maps above shows that given $z \in Z$, assembly is given by $F$ applied to the map which takes $x$ to $x\sm z$, so this is $F(x \to x \sm z)$. 

\begin{defn}
	A functor $F:\cC \to \cC$ is a \emph{monad} if it is a monoid in the category of endofunctors on $\cC$. More explicitly, a monad $F$ is a functor equipped with natural transformations $\eta: \Id \to F$ and $\gamma: F \circ F \to F$ satisfying associativity and unitality diagrams.
\end{defn}

\subsection{Goodwillie's linearization}

In \cite{G3}, Goodwillie constructs the Taylor tower $\{P_nF\}$ of a homotopy functor $F$ from topological spaces to spaces or spectra, and Kuhn shows that Goodwillie's work extends to functors between more general model categories \cite{kuhnoverview}. We will concentrate on the linearization of an endofunctor of spaces.

\begin{defn}
	A functor $F$ is \emph{1-excisive} if for every homotopy cocartesian square $\cX$, the square $F(\cX)$ is homotopy cartesian.
\end{defn}

We will often omit the word ``homotopy'' from our limits and colimits, but it is always intended, unless noted otherwise.

Goodwillie defines the $1$-excisive approximation $P_1F$ of a homotopy functor $F$ as the homotopy colimit of an infinite iteration of intermediate functors $T_1F$. For the purposes of this paper, we only need $T_1F$ for reduced functors $F$, so we substitute Goodwillie's definition with the following equivalent one.

\begin{defn} \label{T1defn}
If $F$ is reduced, the functor $T_1F$ is equivalent to $\Omega \circ F \circ \Sigma$.
\end{defn}

We see that $T_1: \Fun(\cT, \cT) \to \Fun(\cT, \cT)$ can be iterated, and thus $T_1^iF \simeq \Omega^i \circ F \circ \Sigma^i$ for $F$ reduced. There is a natural transformation $t_1:F \to T_1F$. Then $P_1F = \hc_{i \to \infty} T_1^iF$, and the 1st \emph{layer} of the Taylor tower is the fiber $D_1F= \hofib [ P_1F \to F(\ast)]$. The functor $D_1F$ is reduced and is called the \emph{linearization} of the functor $F$. When $F$ is reduced, $P_1F$ is equivalent to the linearization, and $D_1F \simeq \hc_i \Omega^i F \Sigma^i$.

\begin{defn}
	Let $F:\cT \to \cT$ be a homotopy functor.
$F$ is \emph{stably $1$-excisive} or satisfies stable 1st order excision, if the following condition holds for some numbers $c$ and $\kappa$:

$E_1(c,\kappa)$: If $\cX: \cP(\{0,1\}) \to \cC$ is any strongly cocartesian square such that the maps $\cX(\nulll) \to \cX(\{s\})$ are $k_s$-connected for $s \in \{0,1\}$ and $k_s \geq \kappa$, then the diagram $F(\cX)$ is $(-c+\Si k_s)$-cartesian.
\end{defn}

\begin{ex}
	\cite[4.3, 4.5]{G2} The identity functor of spaces is stably 1-excisive by the Blakers-Massey theorem, and the functor $\Hom(K,-)$ is stably 1-excisive, satisfying $E_1(kn, -1)$ where $k=dim(K)$.
\end{ex}

% As the notation suggests, stable 1-excision is the first in a family of conditions called stable $n$-excision which measure how far a functor is from being $n$-excisive. If a functor is stably $n$-excisive for all $n$, then $F$ is called $\rho$-analytic, where $\rho$ is a number depending on all the $c$'s and $\kappa$'s.

% \begin{thm}
% 	\cite[1.13]{G3} If $F$ is $\rho$-analytic and $X$ is (at least) $\rho$-connected, then the connectivity of the map $F(X) \to P_nF(X)$ tends to infinity with $n$, so that $F(X)$ is equivalent to the homotopy limit $P_\infty F(X)$ of the Taylor tower. Thus, the number $\rho$ gives a sort of radius of convergence for the Taylor tower.
% \end{thm}

We think of stable 1-excision as being a bound on the failure of a functor to be 1-excisive; that is, applying the functor to a cocartesian square of sufficiently connected spaces is predictably cartesian. These connectivity conditions turn out to be precisely what is necessary to linearize over a different category and keep the same homotopy, as indicated in Lemma \ref{pnfagree}. 

\subsection{Symmetric sequences and symmetric functor sequences}
We review the pertinent definitions of symmetric sequences and the related category of functors. 

\begin{defn}
	 Let $\cC$ be a category and let $\Sigma$ be the category of finite sets and bijections. A \emph{symmetric sequence} in $\cC$ is a functor $A: \Si \to \cC$. This is a sequence $\{ A(n) \}_{n \geq 1}$ of objects of $\cC$ with a $\Si_n$-action on $A(n)$ for each $n \geq 1$. A morphism of symmetric sequences $f:A \to B$ is a natural tranformation of functors or, explicitly, a sequence of $\Si_n$-equivariant morphisms $f(n):A(n) \to B(n)$. We denote the category of symmetric sequences in $\cC$ by $\Symseq(\cC)$.
\end{defn}

\begin{defn}
	If $\cC$ is a cocomplete closed symmetric monoidal category with monoidal product denoted $\sm$ and if $A,B$ are symmetric sequences in $\cC$, then the \emph{composition product} or \emph{$\circ$-product} of $A$ and $B$ is the symmetric sequence $A \circ B$ defined by 
	\[
	(A \circ B) (n) = \bigvee_{\text{unordered partitions of } \{1, \dots, n \}} A(k) \sm B(n_1) \sm \cdots \sm B(n_k).
	\] 
	%where the coproduct is taken over all unordered partitions of the set $\{1, \dots, n\}$ into nonempty subsets.
\end{defn}

% The $\Si_n$ action on $(A\circ B)(n)$ is not immediately obvious. We give a quick description using the definition of symmetric sequences on the category of all finite sets and isomorphisms. For a finite set $T$, $(A \circ B)(T)=\bigvee_{T=\coprod_{i \in I} T_i} A(I) \sm (\sm_{i \in I} B(T_i))$ for nonempty subsets $T_i$. A bijection $\s:T \to T'$ induces a bijection on partitions $\coprod_{i \in I} T_i \to \coprod_{i \in I'} T'_i$ and there are induced maps $\s_\ast: I \to I'$ so $T_i \to T'_{\s_\ast(i)}$. Following this through for the finite sets $T=\{1,2\}$ and $T'=\{a,b\}$ with the map $\s(1)=b, s(2)=a$ shows that there are two types of action maps. 
% \[
% \xymatrix{
% A(\{i\}) \sm B(\{1,2\}) \bigvee A(\{j,k\}) \sm B(\{1\}) \sm B(\{2\}) \ar[d] \\
% A(\{i'\}) \sm B(\{a,b\}) \bigvee A(\{j',k'\}) \sm B(\{a\}) \sm B(\{b\})}
% \]

% Essentially, the map on the first summand is $id \sm \Si_2$ and the map on the second summand is $\Si_2 \sm$ block permute. So we need to account for both of these types of actions when we consider equivariance of maps of symmetric sequences.

The composition product defines a monoidal product on the category of symmetric sequences in $\cC$. If the unit of $\cC$ is $S$ and zero-object $\ast$, the unit object of $\Symseq(\cC)$ is given by 
\[
1(n)= \left\{ \begin{array}{lc} 
S & \text{if } n=1 \\ \ast & \text{else} 
\end{array} \right.
\]

\begin{defn}
	An \emph{operad} in $\cC$ is a monoid in $\Symseq(\cC)$ under the composition product; that is, an operad is a symmetric sequence $\cO$ with a composition map $\gamma:\cO \circ \cO \to \cO$ and a unit map $\eta:1 \to \cO$ satisfying associativity and unitality diagrams.
\end{defn}

\begin{defn}
Let $\cC$ be a symmetric monoidal category and let $\cC^k$ denote the $k$-fold cartesian product. Each permutation $\sigma \in \Sigma_k$ yields a map $\sigma_\#:\cC^k \to \cC^k$ which permutes the coordinates, $\sigma_\#(X_1, \dots, X_k)=(X_{\sigma(1)}, \dots, X_{\sigma(k)})$.
	A \emph{symmetric functor sequence} in $\cC$ is a sequence of functors $\cF_k: \cC^k \to \cC$ with natural isomorphisms $\sigma_\ast: \cF_k \to \cF_k \circ \sigma_\#$ for each $\sigma \in \Sigma_k$. A morphism of symmetric functor sequences is a sequence of levelwise natural transformations. We denote the category of symmetric functor sequences in $\cC$ by $\Symfun(\cC)$.
\end{defn}

This category is related to a 2-category introduced in \cite{daystreet}. It is shown there that if $\cC$ has coproducts, $\Symfun(\cC)$ has a monoidal product defined for symmetric functor sequences $\cF_\ast$ and $\cG_\ast$ by 
\[
(\cG \circ \cF)_n = \sum_{j_1 + \cdots +j_k=n} \cG_k \circ (\cF_{j_1} \times \cdots \times \cF_{j_k}).
\]

The unit of this monoidal product is given by the initial object $0 \in \Fun(\cC^n,\cC)$ for $n \neq 1$ and $\Id_\cC$ for $n=1$.

\begin{defn}
	A monoid in $\Symfun(\cC)$ is called a \emph{multitensor}, a \emph{lax monoidal category} (in \cite{daystreet}), or a \emph{functor-operad} (in \cite{McClureSmith}).
\end{defn}

\begin{ex}

\begin{enumerate}[(a)]
	%\item In a category with coproducts, the functors $\bigsqcup_k:\cC^k \to \cC$ defined by $\bigsqcup_k(X_1, \dots, X_k)=\coprod_{i=1}^k X_i$ form a functor-operad, by associativity of coproducts.
	\item In a symmetric monoidal category, $\bigotimes_k(X_1, \dots X_k)= \bigotimes_{i=1}^k X_k$ defines a functor-operad. If the category is pointed, $\bigotimes_k$ is pointed in each variable.
	\item If $\op$ is an operad in a category $\cC$ with products, then $\prod_k^\op(X_1, \dots, X_k)=\op(k) \times X_1 \times \cdots \times X_k$ is a functor-operad.
\end{enumerate}
\end{ex}

In \cite{bjy}, Bauer, Johnson, and the author show that for a monad $F$ of $R$-modules, the directional derivatives of abelian calculus, $\nabla^k F_R(X):\Mod_R^k \to \Mod_R$, form a functor-operad. 

% \begin{rem}
% 	If $\cF_\ast$ is a functor-operad in a symmetric monoidal category, then evaluation at the unit in each variable defines an operad in $\cC$. This follows from \cite[4.4]{McClureSmith}, since $Hom(I,X)\cong X$.
% \end{rem}

\begin{defn} \label{defn:monfun}
	A functor $F:\cC \to \sD$ between monoidal categories $(\cC, \otimes_\cC, 1_\cC)$ and $(\sD, \otimes_\sD, 1_\sD)$ is \emph{monoidal} if there is a morphism $\epsilon: 1_\sD \to F(1_\cC)$ and a natural tranformation $\mu_{X,Y}:F(X) \otimes_\sD F(Y) \to F(X \otimes_\cC Y)$ satifying associativity and unitality diagrams.
\end{defn}

\section{A new linearization}

In this section, we define a new model for linearization, using the sphere operad and the category of finite sets with injections. 

\subsection{The category $\II$}
In redefining linearization, we will exploit the properties of a particular indexing category used by B\"{o}kstedt in his definition of topological Hochschild homology. The use of the category $\II$ has found great success in the areas of algebraic K-theory and representation stability. %It can also be viewed as the underlying justification for the symmetric monoidal product on symmetric spectra.

\begin{defn}
	Let $\II$ denote (the skeleton of) the category of finite sets and injective maps. Let $\NN$ denote the category of finite sets with only the standard inclusions (those induced by subset inclusion). 
	% Let $\Si$ denote the category of finite sets with only bijections.
\end{defn}

B\"{o}kstedt showed that under certain conditions on a functor $G: \II \to \cT$, $\hc_\NN G \to \hc_\II G$ is an equivalence \cite{bokthh}. Essentially, the condition is that maps further in the diagram become more and more connected. 
For a multifunctor, there is a criterion for equivalence which reduces to that of B\"{o}kstedt's when $q=1$. 

\begin{lemma} \cite[2.2.2.2]{Dundas-Good-McCarthy} \label{dgm} \label{bok}
	If $G:\II^{q} \to \cT$, $\mathbf{x} \in \II^q$, let $\mathbf{x} \downarrow \II^q$ be the full subcategory of $\II^q$ receiving maps from $\mathbf{x}$, then if $G$ sends maps in $\mathbf{x} \downarrow \II^q$ to $n_{|\mathbf{x}|}$-connected maps and $n_{|\mathbf{x}|} \rightarrow \infty$ as $|\mathbf{x}| \rightarrow \infty$, then $\hc_{\NN^q} G \to \hc_{\II^q} G$ is an equivalence. 
\end{lemma}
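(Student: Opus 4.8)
The plan is to prove the equivalence by comparing the canonical maps into the two homotopy colimits and checking that the comparison is an isomorphism on homotopy groups in every degree. Since $\NN$ consists of the standard inclusions, $\NN^q$ is (the nerve of) the product poset, so $\hc_{\NN^q} G$ is an iterated sequential homotopy colimit, that is, a $q$-fold mapping telescope. In a telescope built from highly connected maps, the canonical map from a finite stage $G(\mathbf{x})$ into the colimit is itself approximately $n_{|\mathbf{x}|}$-connected, and by hypothesis $n_{|\mathbf{x}|} \to \infty$; thus the stages $G(\mathbf{x})$ compute $\hc_{\NN^q} G$ in an arbitrarily large range as $|\mathbf{x}| \to \infty$. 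In this way the whole problem reduces to establishing the analogous connectivity estimate for $\hc_{\II^q} G$.

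The heart of the matter is B\"{o}kstedt's approximation lemma: under the stated hypothesis, the canonical map $G(\mathbf{x}) \to \hc_{\II^q} G$ is approximately $n_{|\mathbf{x}|}$-connected. I would first treat the case $q=1$. Writing $\hc_\II G$ as the realization of the associated Bousfield--Kan simplicial space, I would filter by the cardinality of the objects appearing and estimate the connectivity of the inclusions of successive filtration stages. The key point is that the difference between indexing over $\NN$ and over $\II$ is governed by the under-categories $\mathbf{x} \downarrow \II$ relative to their standard-inclusion subcategories: every injection factors as a standard inclusion followed by a permutation, so the ``extra'' morphisms contribute cells built from $G$ evaluated along the symmetric-group action. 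Since each such structure map is $n_{|\mathbf{x}|}$-connected and $n_{|\mathbf{x}|} \to \infty$, the contribution of the non-standard part is pushed off to infinite connectivity, so the filtration quotients vanish in any fixed range, giving the estimate for $q=1$.

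For general $q$ I would induct using a Fubini theorem for homotopy colimits, writing $\hc_{\II^q} G \simeq \hc_\II \hc_{\II^{q-1}} G$ and likewise for $\NN^q$. Fixing the last coordinate, the inner homotopy colimit $\hc_{\II^{q-1}}$ is applied to a functor whose structure maps are still highly connected; because homotopy colimits preserve the connectivity of maps of diagrams, the resulting single-variable functor again sends morphisms to highly connected maps whose connectivity diverges as the remaining cardinality grows. Applying the $q=1$ case in the outer variable together with the inductive hypothesis inside yields the estimate for $\hc_{\II^q} G$. Combining this with the telescope estimate from the first paragraph, the triangle relating $G(\mathbf{x})$, $\hc_{\NN^q} G$, and $\hc_{\II^q} G$ shows that $\hc_{\NN^q} G \to \hc_{\II^q} G$ is $d$-connected for every $d$ (choose $\mathbf{x}$ with $n_{|\mathbf{x}|} > d+1$), hence an equivalence.

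The main obstacle is the crux connectivity estimate for $\hc_{\II^q} G$, namely B\"{o}kstedt's lemma itself. The subtlety is that $\II$ is not filtered, since two parallel injections cannot be coequalized, so one cannot appeal to naive cofinality of $\NN$ in $\II$; it is precisely the growing connectivity of the structure maps of $G$ that compensates for this failure. Making the bookkeeping precise requires carefully quantifying how the connectivity of the filtration quotients depends on $|\mathbf{x}|$ and verifying, in the inductive step, that these quantitative bounds survive the iterated homotopy colimit so that the limiting connectivity still diverges.
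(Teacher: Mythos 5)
The paper offers no proof of this lemma at all: it is imported verbatim from Dundas--Goodwillie--McCarthy (their 2.2.2.2), so the ``paper's proof'' is a citation, and your task was really to supply B\"okstedt's approximation lemma from scratch. Your outline has the right global shape --- a telescope estimate showing $G(\mathbf{x})\to\hc_{\NN^q}G$ is roughly $n_{|\mathbf{x}|}$-connected, the analogous estimate for $\hc_{\II^q}G$, a Fubini induction on $q$, and a two-out-of-three argument at the end. The Fubini step and the concluding triangle are fine (homotopy colimits do preserve the connectivity of levelwise maps). But the one step that \emph{is} B\"okstedt's lemma, the connectivity estimate for $G(\mathbf{x})\to\hc_{\II^q}G$, is not proved, and the sketch you give for it does not work as stated.

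Concretely: filtering the Bousfield--Kan construction of $\hc_{\II}G$ by the cardinality of the objects appearing in a simplex does not yield filtration quotients that ``vanish in any fixed range.'' A simplex $x_0\to\cdots\to x_p$ with $\max_i|x_i|=N$ still contributes cells of the form $G(x_0)$ smashed with nerve data, where $x_0$ may be small, and nothing in the hypothesis makes the \emph{spaces} $G(x_0)$ highly connected --- the hypothesis only controls the connectivity of the \emph{maps}. Likewise, the observation that every injection factors as a standard inclusion followed by a permutation does not let you discard the ``non-standard part'': the permutations act by automorphisms $G(\sigma)$ on $G(\mathbf{n})$, these are in no way homotopic to the identity, and the whole content of the lemma is that the homotopy colimit nevertheless fails to see them. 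The actual argument (in Dundas--Goodwillie--McCarthy, and in B\"okstedt's original) shows that any map from a sphere of dimension below the stated bound into $\hc_{\II}G$ can be compressed into a single value $G(\mathbf{z})$ by choosing $\mathbf{z}$ large and choosing injections from the finitely many relevant objects into $\mathbf{z}$ with suitably disjoint images, and then coned off using the connectivity hypothesis; this ``pick a fresh large object and cone off'' construction is the missing idea, and it is precisely what compensates for $\II$ not being filtered. Without it your proof has a hole exactly where the theorem lives; in the context of this paper, the intended resolution is simply to cite the reference, as the author does.
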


Using a homotopy colimit over $\II$, we can define a tower $\{\PP_nF\}$ which is equivalent to the Taylor tower when $F$ is analytic. We prove the $n=1$ case. For functors $F:\cT \to \Sp$, the Taylor tower defined in this way is investigated further in \cite{sayclass}.

\begin{defn} \label{PnF} Let $\PP_1F=\hc_{k \in \II} T_1^k F$.
\end{defn}

\begin{lemma} \label{pnfagree}
	When $F$ is stably $1$-excisive, $P_1F \to \PP_1F$ is an equivalence.
\end{lemma}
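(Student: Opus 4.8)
The plan is to recognize that $P_1F$ and $\PP_1F$ are homotopy colimits of \emph{the same} diagram of functors, namely $\mathbf{k} \mapsto T_1^k F$, taken over the two indexing categories $\NN$ and $\II$, and that the comparison map is induced by the inclusion $\NN \hookrightarrow \II$. The whole statement is then an instance of Lemma~\ref{bok} with $q=1$, \emph{once} its connectivity hypothesis is verified. First I would reduce to the case that $F$ is reduced: the first layer of the Taylor tower is insensitive to $F(\ast)$, and $T_1F$ is only defined via $\Omega \circ F \circ \Sigma$ for reduced $F$ (Definition~\ref{T1defn}), so after replacing $F$ by its reduction we have $T_1^k F \simeq \Omega^k F \Sigma^k$. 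Next I would observe that $\NN$, being finite sets with only the standard inclusions, is isomorphic to the straight-line category $(\NN,\leq)$; hence $\hc_\NN (T_1^k F) \simeq \hc_{i} T_1^i F = P_1F$, and the natural map $P_1F \to \PP_1F = \hc_{\II}(T_1^k F)$ is exactly the map on homotopy colimits induced by $\NN \hookrightarrow \II$.

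The heart of the argument is the connectivity estimate needed to apply Lemma~\ref{bok}. Fix a space $X$ and consider the functor $G_X : \II \to \cT$ given by $G_X(U) = \Omega^U F(\Sigma^U X)$. I would show that $G_X$ sends maps far out in the diagram to highly connected maps by feeding stable $1$-excision into the natural transformation $t_1$. Suppose $F$ satisfies $E_1(c,\kappa)$. For a space $Y$ that is $(k-1)$-connected with $k \geq \kappa$, the suspension square on $Y$ (with $Y$ initial, two copies of $\ast$, and $\Sigma Y$ terminal) is strongly cocartesian with legs that are $k$-connected, so $F$ applied to it is $(-c+2k)$-cartesian; since $F$ is reduced the comparison target is $\Omega F(\Sigma Y)$, so $t_1 : F(Y) \to \Omega F(\Sigma Y)$ is $(-c+2k)$-connected. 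Applying this with $Y = \Sigma^m X$, where $\Sigma^m X$ gains connectivity $m$ from suspension, and then looping down by $\Omega^m$ (which costs $m$ in connectivity), the standard generating map $G_X(\mathbf{m}) \to G_X(\mathbf{m+1})$ is roughly $(-c + m + \text{(connectivity of }X) + 2)$-connected, a bound that tends to $\infty$ as $m \to \infty$.

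It remains to promote this from standard inclusions to arbitrary injections, which I would handle by factoring any morphism of $\II$ as an automorphism (a permutation of the coordinate set) followed by a standard inclusion. The permutations act through the symmetry of the coordinate spheres $S^U$ and hence by equivalences on $G_X$, so they do not decrease connectivity, while the inclusion factors contribute the growing connectivity just computed; thus the hypothesis of Lemma~\ref{bok} holds with $n_{|\mathbf{x}|} \to \infty$, and $\hc_{\NN} G_X \to \hc_{\II} G_X$ is an equivalence for every $X$, naturally in $X$, which is the claim. The main obstacle I expect is purely the connectivity bookkeeping: one must check that the gain of $2$ per suspension genuinely outpaces the loss from $\Omega^m$ uniformly, so that the bound diverges, and one must be careful that the automorphism factors act by honest equivalences, which rests on the sphere-operad structure reviewed later in the paper rather than on anything formal.
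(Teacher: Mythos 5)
Your proposal is correct and follows essentially the same route as the paper: both arguments verify the connectivity hypothesis of Lemma~\ref{bok} for the diagram $\mathbf{k} \mapsto T_1^k F$, the only difference being that you rederive the estimate that $t_1$ is $(-c+2\ell)$-connected from the definition of $E_1(c,\kappa)$ applied to the suspension square, whereas the paper cites Goodwillie's Proposition~1.4 and inducts on the statement that $T_1^iF$ satisfies $E_1(c-i,\kappa-i)$. Your explicit handling of non-standard injections via factoring through automorphisms is a reasonable extra precaution that the paper leaves implicit, and your slightly off ``rough'' connectivity bound (the correct count is $(-c+2\ell+m)$ for $X$ an $(\ell-1)$-connected space) does not affect the conclusion since it still diverges.
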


\begin{proof}
We will show that the functor $\Theta: \II \to Fun(\cT, \cT)$ defined by $\Theta(\mathbf{k})=T_1^{|\mathbf{k}|} F$ satisfies the hypotheses of B\"okstedt's lemma (\ref{bok}) when $F$ satisfies $E_1(c, \kappa)$. By \cite[Prop 1.4]{G3}, if $F$ satisfies $E_1(c, \kappa)$, then $T_1F$ satisfies $E_1(c-1, \kappa -1)$ and $t_1F: F \to T_1F$ is $(-c+2\ell)$-connected on objects $X$ which are ($\ell-1$)-connected with $\ell\geq k$.
By induction on $i$, $T_1^{i} F$ satisfies $E_1(c-i, \kappa-i)$, and $T_1^{i} F(X) \to T_1^{i+1}F(X)$ is at least $(i-c+2 \ell)$-connected for $\ell \geq \kappa$. Since $(i-c+2\ell)$ increases as $i$ increases, $\Theta$ satisfies the condition of B\"{o}kstedt's lemma. 
%The maps $T^i \to T^i$ are isomorphisms, so they have high enough connectivity.
\end{proof}

\subsection{The sphere operad}
We will need to be careful with the model of spheres we use in our linearizations, as we need strict associativity. 
We will make use of the sphere operad defined in \cite{AKsphere}, so we recall its definition and salient properties now.

The sphere operad $\mathbf{S}$ is the one-point compactification of a nonunital simplex operad, whose $n$th space is the open $(n-1)$-dimensional simplex, so the $n$th space of $\mathbf{S}$ is homeomorphic to $S^{n-1}$. The operad composition maps are homeomorphisms
\[
S^{k-1} \sm S^{j_1-1} \sm \cdots \sm S^{j_k-1} \to S^{j_1 + \cdots + j_k -1}.
\]

There is a map of operads $\mathbf{S} \to \text{Coend}(S^1)$ such that for each $n \geq 1$ the map $\mathbf{S}_n\cong S^{n-1} \to \Omega S^n$ is adjoint to a homeomorphism $S^{n-1} \sm S^1 \to S^n$. Since the $\Si_n$-action on the coendomorphism operad of $S^1$ permutes the $n$ coordinates of $S^n$, this defines a $\Si_n$-equivariant map $S^1 \sm \mathbf{S}_n \cong S^n$. Finally, there is a map of operads $Com \to \mathbf{S}$ such that the composite $Com \to \mathbf{S} \to \text{Coend}(S^1)$ is levelwise the canonical map adjoint to the diagonal map $S^1 \to S^n$.

\begin{defn}
	Let $\mathbf{S}^U$ denote the operad whose $n$th space is the smash product of $U$ copies of $\mathbf{S}_n$.
\end{defn}

We use the finite set $U$ to index the spheres because we will be linearizing over the category $\II$. 
The operad $\mathbf{S}^U$ has the diagonal $\Si_n$-action induced by that on $\mathbf{S}_n$, and composition maps require a shuffling of coordinates before applying the composition maps of $\mathbf{S}$.

\begin{prop} \label{magic}
For a continuous functor $F:\cT \to \cT$ and $j \geq 1$, there is an associative map
\[
\Omega^U F(S^U) \to \Omega^{\coprod_j U} F(S^{\coprod_j U}). 
\]
\end{prop}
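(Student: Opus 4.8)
The plan is to build the map directly out of the assembly transformation of Lemma~\ref{contthenass}, padding the sphere coordinates by means of the strictly associative smash products supplied by the sphere operad. Write $\coprod_j U = (\coprod_{j-1} U) \sqcup U$, so that the operad $\mathbf{S}^U$ lets us identify $S^{\coprod_j U} = S^{\coprod_{j-1} U} \sm S^U$ on the nose. Given a point $g \in \Omega^U F(S^U)$, regarded as a pointed map $g\colon S^U \to F(S^U)$, I would send it to the composite
\[
S^{\coprod_{j-1} U} \sm S^U \xrightarrow{\ \id \sm g\ } S^{\coprod_{j-1} U} \sm F(S^U) \xrightarrow{\ \alpha_F\ } F\bigl(S^{\coprod_{j-1} U} \sm S^U\bigr),
\]
which is read as an element of $\Omega^{\coprod_j U} F(S^{\coprod_j U})$. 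Denote this assignment $\lambda_j$. Since smashing with an identity, composing with $\alpha_F$, and applying the continuous functor $F$ are all continuous operations, $\lambda_j$ is a continuous pointed map $\Omega^U F(S^U) \to \Omega^{\coprod_j U} F(S^{\coprod_j U})$; I would also record from Lemma~\ref{contthenass} that $\alpha_F$ is basepoint-preserving so that the composite really lands in the based mapping space. As a sanity check, $\lambda_1 = \id$ by the unitality of assembly (with suspension coordinate $S^0$), and for $F = \Id$ the map is the evident stabilization $g \mapsto \id_{S^{\coprod_{j-1}U}} \sm g$.

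First I would make precise the identification $S^{\coprod_j U} = S^{\coprod_{j-1} U} \sm S^U$, and more generally $S^{V \sqcup W} = S^V \sm S^W$. This is exactly the point where the sphere operad is needed: ordinary one-point-compactification spheres are associative only up to coordinate-permuting homeomorphisms, whereas the operad composition maps of $\mathbf{S}$, and hence of $\mathbf{S}^U$, are homeomorphisms satisfying strict associativity, so the induced smash identifications cohere on the nose. This is the sense in which the earlier warning that ``we need strict associativity'' is discharged.

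The heart of the argument is the asserted associativity, which I would phrase as compatibility of $\lambda$ with the identification $\coprod_{j'}(\coprod_j U) = \coprod_{j'j} U$, namely $\lambda_{j'} \circ \lambda_j = \lambda_{j'j}$. Unwinding the definition with $A = \coprod_{j'-1}(\coprod_j U)$ and $B = \coprod_{j-1} U$, one gets $\lambda_{j'}(\lambda_j(g)) = \alpha_F \circ (\id_{S^A} \sm \alpha_F) \circ (\id_{S^{A \sqcup B}} \sm g)$, whereas $\lambda_{j'j}(g) = \alpha_F \circ (\id \sm g)$ with suspension coordinate $S^{A \sqcup B}$ (a count of cardinalities shows $|A \sqcup B| = (j'j-1)|U|$, as required). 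The two composites agree precisely because of the coherence of the assembly transformation, $\alpha_F \circ (\id_Z \sm \alpha_F) = \alpha_F \colon Z \sm W \sm F(X) \to F(Z \sm W \sm X)$, which I would verify by chasing the adjunction definition of $\alpha_F$ given in the proof of Lemma~\ref{contthenass}.

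I expect the main obstacle to be bookkeeping rather than anything conceptual: one must check that every identification of iterated sphere smash-powers occurring in the two compositions is the one induced by the operad structure of $\mathbf{S}^U$, matching strictly and with no stray permutation of the $U$-indexed coordinates, so that the assembly-associativity above applies verbatim. It is for exactly this reason that the strictly associative model $\mathbf{S}^U$ is used in place of a model that is associative only up to homotopy.
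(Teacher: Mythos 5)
Your construction is genuinely different from the paper's. The paper sends $f$ to $\alpha_F \circ (\mathbf{S}^U_j \sm f)$, smashing with the $j$th space of the sphere operad $\mathbf{S}^U$ and identifying $\mathbf{S}^U_j \sm S^U \cong S^{\coprod_j U}$ via the operad's action on $S^1$; associativity then falls out of the strict associativity of the operad composition maps $\mathbf{S}_{j'} \sm \mathbf{S}_j \to \mathbf{S}_{j'j}$ together with the coherence of assembly. You instead smash with the identity of the standard sphere $S^{\coprod_{j-1}U}$, using the splitting $\coprod_j U = (\coprod_{j-1}U)\sqcup U$, and derive associativity from assembly coherence plus set-level bookkeeping. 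For the literal statement -- ``there is an associative map'' -- your argument does produce one, and your verification of $\lambda_{j'}\circ\lambda_j=\lambda_{j'j}$ is sound provided the coproduct on the skeleton of $\II$ is taken with a strictly associative (e.g.\ lexicographic) convention.

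However, two points deserve correction. First, you misattribute the role of the sphere operad: the identification $S^{V\sqcup W}=S^V\sm S^W$ for set-indexed smash powers is already canonical and strictly associative, so that is not where $\mathbf{S}$ is needed. The operad is needed because $\mathbf{S}_j$ carries a $\Sigma_j$-action under which $\mathbf{S}_j\sm S^1\cong S^j$ is equivariant (with $\Sigma_j$ permuting the coordinates of $S^j$), and because the padding spheres must compose via the operad structure maps $\mathbf{S}_k\sm\mathbf{S}_{j_1}\sm\cdots\sm\mathbf{S}_{j_k}\to\mathbf{S}_{\sum j_i}$ when the map is iterated across different functors in Theorem~\ref{linisoperad}. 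Second, and consequently, your map is not the one the paper needs downstream: the splitting $S^{\coprod_j U}=S^{\coprod_{j-1}U}\sm S^U$ singles out the last copy of $U$ and is not symmetric in the $j$ copies, so your $\lambda_j$ lacks the $\Sigma_j$-equivariance that is explicitly invoked in the proof of the main theorem (``the map described in Proposition~\ref{magic}'' is asserted there to be equivariant, and appears in the form $\alpha_{\cG_2}\circ[\mathbf{S}^{U_1}_{j_1}\sm\mathbf{S}^{U_2}_{j_2}\sm -]$). So while your proof closes the stated proposition, it would not support the use made of it; you should replace the identity-padding by the operad space $\mathbf{S}^U_j$.
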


Define the map by smashing an element of $\Omega^U F(S^U)$ with the $j$th space of $\mathbf{S}^U$ then assembling the sphere into $F$. That is, $f$ in $\Omega^U F(S^U)$ maps to the composite
\[
S^{\coprod_j U} \cong \mathbf{S}^U_j \sm S^U \xrightarrow{\mathbf{S}^U_j \sm f} \mathbf{S}^U_j \sm F(S^U) \xrightarrow{\alpha_F} F(\mathbf{S}^U_j \sm S^U) \cong F(S^{\coprod_j U}).
\]

This map is strictly associative because the sphere operad composition maps are associative. For example, under the maps $\Omega S^1 \to \Omega^2 S^2 \to \Omega^3 S^3$, the element $f \in \Omega S^1$ is sent to $\mathbf{S}_2 \sm \mathbf{S}_2 \sm \mathbf{S}_1 \sm f$, which is equivariantly homeomorphic to $\mathbf{S}_3 \sm f$, the image of $f$ under $\Omega S^1 \to \Omega^3 S^3$.

\section{Linearizing symmetric functor sequences} \label{mainthm}

In \cite{G3}, Goodwillie identifies the derivatives of a functor as the multilinearized cross-effects evaluated at $S^0$. The $n$th cross-effect is a functor of $n$ variables which satisfies some connectivity hypotheses, namely, it is stably 1-excisive in each variable, even after partial linearization. The collection of cross-effects form a symmetric functor sequence. In this section, we will consider multilinearization of more general symmetric functor sequences.

\begin{defn}
	Let $\cF_n:\cT^n \to \cT$. Then we denote the linearization of $\cF_n$ in each variable over $\II$ by 
	\[\mathbb{D}_1^{(n)}\cF_n(X_1, \dots, X_n) =\hc_{U_1, \dots, U_n \in \II} \Omega^{U_1} \cdots \Omega^{U_n} \cF_n(\Sigma^{U_1}X_1, \dots, \Sigma^{U_n}X_n).\] 
	We denote the $\II^n$-shaped diagram over which the linearization is taken by $\Omega^{\bullet, \cdots, \bullet} \cF_n \Sigma^{\bullet, \cdots, \bullet}$.
\end{defn}

There is a natural map $D_1^{(n)}\cF_n \to \mathbb{D}_1^{(n)}\cF_n$, which is an equivalence when $\Omega^{\bullet, \cdots, \bullet}\cF_n \Sigma^{\bullet, \cdots, \bullet}$ satisfies the condition of Lemma \ref{dgm}. We abuse notation by letting $\cF_n(X)=\cF_n(X, \dots, X)$, the evaluation of the multifunctor on the diagonal. 
% 
% The $\Sigma_n$-action is induced by permuting the $n$ inputs of $\mathcal{F}_n$, which also permutes the loops in the multilinearization.
% For example, the $\Si_2$-action on $\partial_2^\cT F$ is the conjugate action which block swaps the sphere coordinates of the loops and variables, given by sending $f \in \Omega^U \Omega^V \cF_2(S^U, S^V)$ to the composite
% \[S^V \sm S^U \xrightarrow{\chi_{U,V}^{-1}} S^U \sm S^V \overset{f}{\longrightarrow} \cF_2(S^U, S^V) \xrightarrow{\cF_2(\tau)} \cF_2(S^V, S^U).\]

\begin{thm} \label{linisoperad}
	The functor $\mathbb{D}_1^{(\ast)}-(S^0):\Symfun_\ast(\cT) \to \Symseq(\cT)$ from multipointed symmetric functor sequences of spaces to symmetric sequences is lax monoidal.
\end{thm}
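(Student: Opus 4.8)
The plan is to unwind what ``lax monoidal'' demands and produce the two structure maps, then verify the coherence diagrams. Concretely, I must exhibit a unit map $S^0 \to \DD_1^{(1)}\Id(S^0) = \DD_1\Id(S^0)$ and, for any two multipointed symmetric functor sequences $\cF_\ast, \cG_\ast$, a natural composition map
\[
\DD_1^{(\ast)}\cG_\ast(S^0) \circ \DD_1^{(\ast)}\cF_\ast(S^0) \longrightarrow \DD_1^{(\ast)}(\cG \circ \cF)_\ast(S^0)
\]
of symmetric sequences, and then check associativity and unitality. The first thing I would do is unpack both sides at a fixed level $n$ using the definition of the composition product and the definition of the $\Symfun$ composite $(\cG\circ\cF)_n = \sum_{j_1+\cdots+j_k=n}\cG_k\circ(\cF_{j_1}\times\cdots\times\cF_{j_k})$. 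The left-hand side at level $n$ is a wedge over unordered partitions of $\{1,\dots,n\}$ of terms
\[
\DD_1^{(k)}\cG_k(S^0)\sm \DD_1^{(j_1)}\cF_{j_1}(S^0)\sm\cdots\sm \DD_1^{(j_k)}\cF_{j_k}(S^0),
\]
and the target is the corresponding wedge summand $\DD_1^{(n)}\bigl(\cG_k\circ(\cF_{j_1}\times\cdots\times\cF_{j_k})\bigr)(S^0)$. So it suffices to build a map summand-by-summand and check it assembles equivariantly.

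The core construction is the assembly-based shuffle. Writing out a single summand, the source is a smash of homotopy colimits over copies of $\II$, each of the form $\Omega^{V}\cG_k(\Sigma^{V}S^0,\dots)$ smashed with factors $\Omega^{U_i}\cF_{j_i}(\Sigma^{U_i}S^0,\dots)$. Since smashing commutes with homotopy colimits in each variable, I can pull the smash inside one big homotopy colimit over $\II^{1+k}$ (one $\II$ for $\cG_k$ and one for each $\cF_{j_i}$). Inside, I have an element of $\Omega^{V}\cG_k(S^{V},\dots)\sm\bigsm_i\Omega^{U_i}\cF_{j_i}(S^{U_i},\dots)$, and I want to land in $\Omega^{W}(\cG_k\circ(\cF_{\cdots}\times\cdots))(S^{W},\dots)$ for a suitable $W$. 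The natural choice is $W = V \amalg U_1 \amalg \cdots \amalg U_k$ (or, more honestly, one must first use the sphere-operad maps of Proposition~\ref{magic} to move all the inner loop/suspension coordinates into a common index before composing). The recipe is: use the $\cF_{j_i}$ loops to feed the inputs of $\cG_k$, then apply $\cG_k$'s continuity via the assembly map $\alpha_{\cG_k}$ (Lemma~\ref{contthenass}) to pull the suspension coordinates $V$ out past $\cG_k$, and finally use the sphere operad $\mathbf{S}^U$ to identify $\Sigma^{V}\Sigma^{U_i}(\cdots)$ with $\Sigma^{W}(\cdots)$ up to the canonical equivariant homeomorphisms. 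This is exactly the pattern already exploited in Proposition~\ref{magic}, so I expect the single-summand map to be a direct generalization of that construction, and its strict associativity over iterated compositions to follow from the strict associativity of the sphere operad composition maps recorded there. The unit map $S^0\to\DD_1\Id(S^0)$ is the inclusion of the $U=\emptyset$ (equivalently the initial $\II$-object) stage of the homotopy colimit $\hc_{U\in\II}\Omega^U\Sigma^U S^0$.

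After building the summandwise maps, the remaining work is bookkeeping of three kinds. First, equivariance: the wedge on the source is indexed by \emph{unordered} partitions, so I must check that the shuffle maps are compatible with the $\Sigma_n$-action and with the $\Sigma_k$- and $\Sigma_{j_i}$-symmetries carried by the symmetric functor sequences; here the $\Sigma_n$-equivariance of the map $S^1\sm\mathbf{S}_n\cong S^n$ and the diagonal action on $\mathbf{S}^U$ are what make the coordinate shuffles equivariant. Second, coherence: I must verify the associativity pentagon for the composition map and the two unit triangles, each of which should reduce to the corresponding coherence for the sphere operad $\mathbf{S}$ and for assembly (assembly is associative and unital as a module-type structure), so these become formal once the single map is pinned down. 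Third, naturality in $\cF_\ast$ and $\cG_\ast$, which is immediate because assembly is natural in the functor (Lemma~\ref{contthenass}) and $\DD_1^{(\ast)}$ is built from functorial homotopy colimits.

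I expect the main obstacle to be the strictness needed to make the coherence diagrams commute \emph{on the nose} rather than merely up to homotopy: homotopy colimits, smash products, and loop/suspension coordinates must be interchanged in a controlled way, and the only thing that rescues strict associativity is the use of the sphere operad $\mathbf{S}^U$ (and its associative composition maps) in place of naive suspension coordinates. The delicate point is tracking exactly which copy of $\II$ and which block of sphere coordinates each factor contributes after the partition is refined under iterated composition --- i.e.\ matching the index set $V\amalg\coprod_i U_i$ produced by $\cG\circ\cF$ against the index set produced by further composing with a third sequence --- and confirming that the two routes around the associativity square induce the \emph{same} permutation of sphere coordinates, which is precisely where the associativity clause of Proposition~\ref{magic} is invoked. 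Establishing that the connectivity hypotheses of Lemma~\ref{dgm} let $\DD_1^{(n)}$ agree with ordinary multilinearization (the last sentence of the introductory theorem) is then a separate, more routine application of B\"okstedt's lemma to the diagonal diagram.
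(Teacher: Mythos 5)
Your proposal matches the paper's proof in all essentials: the same unit (inclusion of the initial stage of the homotopy colimit), the same summandwise composition map built by assembling the homotopy colimits and loops out of the smash, replicating the outer loop coordinates via the sphere-operad map of Proposition~\ref{magic}, nesting via the assembly maps of Lemma~\ref{contthenass}, and reindexing along the coproduct $\II\times\cdots\times\II\to\II$, with strict associativity and equivariance traced to the sphere operad and the coproduct on $\II$ exactly as in the paper's remark. The only difference is that the paper fixes a definite order of operations (Proposition~\ref{magic} first, then assembly of $\cG$, then assembly of the $\cF_{j_i}$, one copy of each outer sphere per inner variable), which pins down the bookkeeping you correctly flag as the delicate point.
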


\begin{proof}
The unit $\epsilon: S^0 \to \DD_1 \Id(S^0)$ is given by inclusion of the first object in the homotopy colimit
\[
\Id(S^0) \to \DD_1 \Id (S^0)=\hc_\II \left( \xymatrix{\Id(S^0) \ar[r] & \Omega \Id \Sigma (S^0) \ar@<-.5ex>[r] \ar@<.5ex>[r] & \cdots}  \right)
\]

We note that the symmetric group $\Sigma_n$ acts on $\DD_1^{(n)} \cF_n(S^0)$ by permuting the $n$ inputs of $\cF_n$. In the linearization, this also block-permutes the loops.

% 	The natural transformation $\gamma$ is a map of symmetric sequences, thus a levelwise equivariant map. On level $j$, this is 
% \[
% \bigvee_{\text{partitions of } \{1, \dots, j \}} \partial_k^\cT F \sm \partial_{j_1}^\cT G \sm \cdots \sm \partial_{j_k}^\cT G \longrightarrow \partial_{j}^\cT(F \circ G)
% 	\]
% 	which boils down to defining maps
% 	\[
% \partial_k^\cT F \sm \partial_{j_1}^\cT G \sm \cdots \sm \partial_{j_k}^\cT G \longrightarrow \partial_{j}^\cT (F \circ G) \ \ \ \ \text{for all} \ \ j=j_1 + \cdots + j_k.
% 	\]

We now define 
\[
\mu_{k,j_i}:\DD_1^{(k)}\cG_k(S^0) \sm \DD_1^{(j_1)}\cF_{j_1}(S^0) \sm \cdots \sm \DD_1^{(j_k)} \cF_{j_k}(S^0) \to \DD_1^{(\sum j_i)} (\cG \circ \cF)_{\sum j_i}(S^0)
\]

We will start with the definition of the map on level one:
\[
\mu_{1,1}:\DD_1\cG_1(S^0) \sm \DD_1\cF_{1}(S^0) \to \DD_1 (\cG \circ \cF)_1 (S^0).\] Recall that the homotopy colimit and loops functors are both continuous, so by Lemma \ref{contthenass} they have assembly maps $\alpha$. The first step is to assemble the homotopy colimits and loops out of the smash product. Next, we use assembly for $\cF_1$ and $\cG_1$ to nest them then include this summand into the composition $\cG \circ \cF$. Finally, we use the map induced by $\coprod: \II \times \II \to \II$ to reindex the homotopy colimit.

\[
\xymatrix{\hc \limits_{U \in \II} \Omega^U \cG_1(S^U) \sm \hc \limits_{V \in \II} \Omega^V \cF_1(S^V) \ar[d]^{\alpha_{\hc}, \,  \alpha_\Omega} \\ 
\hc \limits_{U \in \II} \hc \limits_{V \in \II} \Omega^U  \Omega^V \cG_1(S^U) \sm  \cF_1(S^V) \ar[d]^{\alpha_{\cG_1},\alpha_{\cF_1}} \\
\hc \limits_{U \in \II} \hc \limits_{V \in \II} \Omega^U  \Omega^V \cG_1(\cF_1(S^U \sm S^V)) \ar[d]^{\text{incl}} \\
\hc \limits_{(U,V) \in \II \times \II} \Omega^{U\coprod V} (\cG \circ \cF)_1(S^{U\coprod V}) \ar[d]^{\coprod_\ast} \\
\hc \limits_{W \in \II} \Omega^W  (\cG \circ \cF)_1(S^W)}
\]

\begin{rem} The last step is the key reason for using $\II$; if the homotopy colimit is defined over $\NN$, the map $\mu$ can be defined, but it will not be strictly associative on homotopy colimits. This is similar to the reason naive spectra do not have a good smash product, but symmetric spectra have enough extra structure to encode the smash product in an associative way.
	 \end{rem}

We will introduce new notation to save some ink in the definition of the general $\mu$. If $U, V_1, \dots , V_{k}$ are finite sets, let $S^{\underline{V}}$ denote the $k$-tuple of spheres $(S^{V_1}, \dots, S^{V_k})$ and let $S^{U \coprod \underline{V}}=(S^{U\coprod V_1}, \dots , S^{U \coprod V_k})$.
We will restrict to the case 
\[\mu_{2,j_1,j_2}:\DD_1^{(2)}\cG_2(S^0) \sm \DD_1^{(j_1)} \cF_{j_1}(S^0) \sm \DD_1^{(j_2)} \cF_{j_2}(S^0) \to \DD_1^{(j_1+j_2)} (\cG \circ \cF)_{j_1+j_2}(S^0),\]
and note that the general case follows easily. 
%Note also that the superscripts are indices and do not indicate powers.

The map $\mu$ is defined as a long composition, with most maps the same as in the level 1 case. As before, we assemble the homotopy colimits and loops out of the smash product first, but then we apply the map constructed in Proposition \ref{magic}, $\al_{\cG_2} \circ \left[ \mathbf{S}^{U_1}_{j_1} \sm \mathbf{S}^{U_2}_{j_2} \sm -\right]$. Then we use assembly for the $\cF_{j_i}$ to nest them, noting that only one copy of $S^{U_i}$ assembles into each variable of $\cF_{j_i}$. Finally, as before, we include into the composition of symmetric functor sequences, and use the coproduct of $\II$ to reindex the homotopy colimit. That is, $\mu$ is defined by the following composite.

\[ \xymatrix{
\makebox[.8\textwidth][c]{$\hc \limits_{U_1, U_2 \in \II} \Omega^{U_1}\Omega^{U_2} \cG_2 (S^{U_1}, S^{U_2}) \sm \hc \limits_{V_1^1, \dots, V_{j_1}^1 \in \II} \Omega^{\coprod V^1_i} \cF_{j_1} (S^{\underline{V}^1}) \sm \hc \limits_{V_1^{2}, \dots V_{j_2}^2 \in \II} \Omega^{\coprod V_\ell^2} \cF_{j_2} (S^{\underline{V}^2})$} \ar[d]^{\alpha_{\hc}, \alpha_\Omega} \\ \hc \limits_{U_1, U_2, V_1^1, \dots, V_{j_2}^2 \in \II}  \Omega^{U_1}\Omega^{U_2} \Omega^{\coprod V^1_i} \Omega^{\coprod V_\ell^2} \cG_2 (S^{U_1},S^{U_2}) \sm   \cF_{j_1} (S^{\underline{V}^1}) \sm   \cF_{j_2}(S^{\underline{V}^2}) \ar[d]^{\al_{\cG_2} \circ \left[ \mathbf{S}^{U_1}_{j_1} \sm \mathbf{S}^{U_2}_{j_2} \sm -\right]} \\ 
\makebox[.8\textwidth][c]{$\hc \limits_{U_1, U_2, V_1^1, \dots, V_{j_2}^2 \in \II}  \Omega^{\coprod_{j_1} U_1}\Omega^{\coprod_{j_2} U_2} \Omega^{\coprod V^1_i} \Omega^{\coprod V_\ell^2} \cG_2 (S^{\coprod_{j_1}U_1}, S^{\coprod_{j_2}U_2}) \sm   \cF_{j_1} (S^{\underline{V}^1}) \sm   \cF_{j_2} (S^{\underline{V}^2})$} \ar[d]^{\alpha_{\cG_2}} \\
\makebox[.8\textwidth][c]{$\hc \limits_{U_1, U_2, V_1^1, \dots, V_{j_2}^2 \in \II} \Omega^{\coprod_{j_1} U_1}\Omega^{\coprod_{j_2} U_2} \Omega^{\coprod V^1_i} \Omega^{\coprod V_\ell^2} \cG_2 (S^{\coprod_{j_1}U_1}\sm   \cF_{j_1} (S^{\underline{V}^1}), S^{\coprod_{j_2}U_2} \sm  \cF_{j_2} (S^{\underline{V}^2}))$} \ar[d]^{\alpha_{\cF_{j_i}}} \\
\makebox[.8\textwidth][c]{$\hc_{U_1,U_2, V_1^1, \dots, V_{j_2}^2 \in \II}  \Omega^{\coprod (U_1 \coprod V^1_i)} \Omega^{\coprod (U_2 \coprod V_\ell^2)} \cG_2 ( \cF_{j_1} (S^{U_1 \coprod \underline{V}^1}), \cF_{j_2} (S^{U_2 \coprod \underline{V}^2}))$} \ar[d]^{\coprod_\ast \circ \text{incl}} \\
\hc_{W_1, \dots, W_{j} \in \II}  \Omega^{\coprod W_i} (\cG \circ \cF)_j( S^{W_1}, \dots, S^{W_{j}})
} 
\]

Note that the assembly maps are equivariant and associative, as is the map described in Proposition \ref{magic}, and the composition of symmetric functor sequences is also equivariant with respect to permuting the variables, so the composition above is equivariant and associative.
\end{proof}

Note further that if a functor-operad $\cF_\ast$ is such that $\Omega^{\bullet, \cdots, \bullet} \cF_n \Sigma^{\bullet, \cdots, \bullet}$ satisfies the conditions of Lemma \ref{dgm} for each $n$, then the multilinearization over $\II$ is equivalent to the usual multilinearization of $\cF_\ast$.

Monoidal functors take monoids to monoids, so the multilinearization of a multipointed functor-operad or multitensor of spaces evaluated at $S^0$ is an operad. Given a monoidal model for the cross-effects functor $cr_\ast$, this would recover the operad structure of the derivatives of the identity functor of spaces, giving explicit structure maps and extend the operad structure for the identity in \cite{Chingthesis} to all monads on $\cT$.

\bibliography{mybib}{}
\bibliographystyle{amsalpha}

\end{document}